\DeclarePairedDelimiter{\ceil}{\lceil}{\rceil}
\DeclarePairedDelimiter{\floor}{\lfloor}{\rfloor}
\patchcmd{\ps@pprintTitle}{\footnotesize\itshape
        \hfill\today}{\relax}{}{}
\newtheorem*{thmA}{Theorem A}
\newtheorem*{thmB}{Theorem B}
\newtheorem*{thmC}{Theorem C}
\newtheorem*{thmD}{Theorem D}
\newtheorem*{thmE}{Theorem E}
\newtheorem*{thmF}{Theorem F}
\newtheorem{theorem}{Theorem}[section]
\newtheorem{lemma}[theorem]{Lemma}
\newtheorem*{con7*}{Conjecture 7*}
\theoremstyle{definition}
\newtheorem{definition}[theorem]{Definition}
\newtheorem{conj}{Conjecture}
\DeclareMathOperator{\e}{\mathit{exp}}
 \DeclareFontFamily{U}{wncy}{}
    \DeclareFontShape{U}{wncy}{m}{n}{<->wncyr10}{}
    \DeclareSymbolFont{mcy}{U}{wncy}{m}{n}
    \DeclareMathSymbol{\Sh}{\mathord}{mcy}{"58} 
\journal{}
\begin{document}

\begin{frontmatter}

\title{ On Schurs exponent conjecture and its relation to Noether's Rationality problem}

\author[IISER TVM]{V.Z. Thomas\corref{cor1}}
\address[IISER TVM]{School of Mathematics,  Indian Institute of Science Education and Research Thiruvananthapuram,\\695551
Kerala, India.}
\ead{vthomas@iisertvm.ac.in}
\cortext[cor1]{Corresponding author. \emph{Phone number}: +91 8921458330}

\begin{abstract}
In this short survey article, we aim to provide an up to date information on the progress made towards Schurs exponent conjecture and related conjectures. We also mention the connection between Schurs exponent conjecture and Noether's Rationality problem.
\end{abstract}

\begin{keyword}
 Schur Multiplier  \sep group actions.
\MSC[2010]   20B05 \sep 20D10 \sep 20D15 \sep 20F05 \sep 20F14 \sep 20F18 \sep 20G10 \sep 20J05 \sep 20J06 
\end{keyword}

\end{frontmatter}

 \section{Introduction}
 
 Let $\e(G)$ denote the exponent of the group $G$, which is the smallest positive integer $n$ such that $g^n=1$ for all $g\in G$, and let $H_2(G, \mathbb{Z})$ denote the second homology group with coefficients in $\mathbb{Z}$. Schur's exponent conjecture can now be stated as
 
  \begin{conj}\label{C1}
 if $G$ is a finite group, then $\e(H_2(G,\mathbb{Z})) \mid \e(G)$.  
\end{conj}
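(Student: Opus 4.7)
The plan is to reduce the statement, via standard homological machinery, to a purely group-theoretic commutator identity in a free presentation, and then attempt to verify that identity using the Hall--Petresco collection process. A first reduction is to replace $G$ by its Sylow subgroups: since the restriction--corestriction composition on $H_2(G,\mathbb{Z})$ restricted to a Sylow $p$-subgroup $P$ is multiplication by the index $[G:P]$, which is coprime to $p$, the $p$-primary part of $H_2(G,\mathbb{Z})$ injects into $H_2(P,\mathbb{Z})$. Since $\e(P) \mid \e(G)$, it suffices to prove the conjecture for finite $p$-groups. I would then fix a free presentation $1 \to R \to F \to P \to 1$ and invoke Hopf's formula $H_2(P,\mathbb{Z}) \cong (R \cap [F,F])/[F,R]$, which converts the conjecture into the assertion
\[
(R \cap [F,F])^{\e(P)} \subseteq [F,R].
\]

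The second step is to attack this containment by commutator calculus. Given $w \in R \cap [F,F]$, write $w$ as a product of commutators $[x_i,y_i]$ in $F$, and try to expand $w^n$, for $n = \e(P)$, using the Hall--Petresco identity
\[
(ab)^{n} = a^{n} b^{n} c_{2}^{\binom{n}{2}} c_{3}^{\binom{n}{3}} \cdots c_{n},
\]
where each $c_{k}$ lies in the $k$-th term of the lower central series of the subgroup generated by $a$ and $b$. The hope is that after enough collection, every factor appearing in $w^{n}$ can be shown to lie in $[F,R]$: the leading terms $[x_i,y_i]^{n}$ should be handled by the fact that $(xy)^{n} x^{-n} y^{-n}$ lies in $[F,F]$ and that $n$-th powers of elements of $F$ that map into $P$ must lie in $R$, while the higher correction terms $c_{k}$ lie in deeper commutators, eventually meeting $[F,R]$.

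The third step, which is where one would actually earn the theorem, is to show that the binomial coefficients $\binom{n}{k}$ together with the lower-central depth of $c_{k}$ are enough to push every correction term into $[F,R]$. For groups of small nilpotency class this works by a direct count: if $P$ has class $c$, then only finitely many $c_{k}$ are non-trivial, and one can trade nilpotency depth against the $p$-adic valuation of $\binom{n}{k}$. Indeed this is precisely how the conjecture has been established for groups of class $\leq 2$ (where it is classical) and has been pushed to a few higher classes with considerable effort.

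The main obstacle, and the reason the conjecture remains open in general, is that the Hall--Petresco collection process produces corrections of unbounded commutator weight as the nilpotency class of $P$ grows, and the $p$-adic valuation of $\binom{n}{k}$ does not keep pace. Without an a priori bound on the class of $P$ in terms of $\e(P)$, one cannot guarantee that the tail of the Hall--Petresco expansion lands in $[F,R]$. Overcoming this would require either a new identity that replaces Hall--Petresco with something better adapted to the subgroup $R \cap [F,F]$, or a fundamentally non-commutator-theoretic input, for example from the Noether rationality side alluded to in the abstract, where generic freeness of a faithful linear action could be used to produce extensions realising $H_2(P,\mathbb{Z})$ as a quotient of a group whose exponent is visibly controlled by $\e(P)$.
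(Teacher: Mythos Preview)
There is a fundamental problem: the paper does not prove this statement, because it is \emph{false}. Conjecture~\ref{C1} is not a theorem in the paper but a conjecture that the paper surveys, and the paper explicitly records counterexamples. Already in the introduction the authors of \cite{BKW} exhibit a $2$-group of order $2^{68}$ with $\e(G)=4$ but $\e(H_2(G,\mathbb{Z}))=8$; \cite{PM1} gives a smaller $2$-group of order $2^{11}$ with the same exponents; and, most strikingly for your reduction to odd primes, the paper reports Vaughan-Lee's four-generator group of exponent $5$, nilpotency class $9$, and order $5^{4122}$ whose Schur multiplier has exponent $25$. So the target containment $(R\cap [F,F])^{\e(P)}\subseteq [F,R]$ that you extract via Hopf's formula simply fails for these $P$.

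Your write-up is honest enough to flag the obstacle: in step three you note that the Hall--Petresco corrections grow in commutator weight faster than the $p$-adic valuations of $\binom{n}{k}$ can absorb, absent a bound on the class in terms of the exponent. That is not a gap to be filled but the actual mechanism by which the conjecture breaks. The partial results you allude to (class $\le p$, metabelian of exponent $p$, powerful $p$-groups, etc.) are exactly the cases listed in Theorem~A where auxiliary hypotheses tame the tail of the collection process; Theorem~D records that without such hypotheses one only gets $\e(M(G))\mid \e(G)^{m}$ with $m$ depending on the class. So your reduction to $p$-groups and to Hopf's formula is sound, and your commutator-calculus plan is the standard one for the positive cases, but it cannot be completed in general, and no alternative ``Noether rationality'' input will rescue it: the paper shows that vanishing of the Bogomolov multiplier \emph{implies} Conjecture~\ref{C1} (Lemma~\ref{L:3.1}), so the counterexamples above have nontrivial $B_0(G)$ and give negative instances of Noether's problem rather than a route to a proof.
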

 In \cite{MRJ}, the author mentions the above as a conjecture. So this conjecture is at least 46 years old. In \cite{BKW}, the authors found a remarkable counterexample to this conjecture. Their counterexample involved a $2$-group of order $2^{68}$ with $\e(G)=4$ and $\e(H_2(G,\mathbb{Z}))=8$. In \cite{PM1}, the author mentions another counterexample to Schur's conjecture. His counterexample involves a $2$-group of order $2^{11}$ with $\e(G)=4$ and $\e(H_2(G,\mathbb{Z}))=8$. All the counterexamples produced so far involve $2$ groups. During the write up of this survey, the author was working on groups of exponent $5$. The author could manage to prove the conjecture for $R(2,5)$ and $R(3,5)$, where $R(d,5)$ denotes the largest finite quotient of the $d$ generator Burnside group $B(d,5)$ of exponent $5$. For $R(4,5)$, it is known that the nilpotency class is at most 24 (cf. \cite{HNV}). I felt, I could prove Schur's conjecture if the nilpotency class of $R(4,5)$ was less than 24, but I only managed to prove that $\e(H_2(G,\mathbb{Z})) \mid  (\e(G))^2$. By this point, I had started to feel that the conjecture may not be true for $R(d,5)$, where $d>4$. To decide whether to continue my approach further or not, I contacted M. R. Vaughan-Lee asking him if the nilpotency class of $R(4,5)$ is less than 24. He replied saying that it is an interesting question, but it is not known and perhaps very difficult to decide, but most probably it is less than 24. I explained to him why I was interested in the nilpotency class of $R(4,5)$ and its connection to Schur's conjecture and mentioned that one could expect a counterexample for groups of exponent 5 and shared a preliminary version of this survey article with him. He got interested in the problem and found a remarkable counterexample to Schur's conjecture. His counterexample involves a $4$ generator group $G$ of exponent $5$, nilpotency class 9, but the $\e(H_2(G,\mathbb{Z}))=25$. The order of this group is $5^{4122}$. The group $G$ is defined as follows. He first defines a four generator group $H$ with the presentation $<a,b,c,d|[b,a]=[d,c]>$. Then he defines $G$ to be the largest quotient of $H$ with exponent $5$ and nilpotency class $9$. He also obtained another counterexample of a $4$ generator group of order $3^{11983}$, nilpotency class $9$ and exponent $9$, with $\e(H_2(G,\mathbb{Z}))=27$.
 
 Much before these counterexamples were produced by Vaughan-Lee, the author of  \cite{PM5} conjectures the following
 
 \begin{conj}\label{C2}
 if  G is a finite group, then $\e(H_2(G,\mathbb{Z})) \mid  (\e(G))^2$.  
\end{conj}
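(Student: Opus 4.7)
The plan is to reduce Conjecture \ref{C2} to the case of finite $p$-groups and then attack it through the non-abelian exterior square. First, using the standard restriction--corestriction argument, the $p$-primary component of $H_2(G,\mathbb{Z})$ embeds into $H_2(P,\mathbb{Z})$ where $P$ is a Sylow $p$-subgroup of $G$. Since $\e(P) \mid \e(G)$, it suffices to prove the bound $\e(H_2(P,\mathbb{Z})) \mid \e(P)^2$ uniformly over all finite $p$-groups, and then recombine the $p$-primary pieces.

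For a finite $p$-group $G$, I would pass from $H_2$ to the non-abelian exterior square via the exact sequence
$$0 \longrightarrow H_2(G,\mathbb{Z}) \longrightarrow G \wedge G \longrightarrow [G,G] \longrightarrow 1,$$
so that the task reduces to bounding $\e(G \wedge G)$ by $\e(G)^2$. Equivalently, the Hopf formula $H_2(G,\mathbb{Z}) \cong (R \cap [F,F])/[F,R]$ arising from a free presentation $1 \to R \to F \to G \to 1$ permits direct commutator manipulations inside $F$, which is often the most flexible viewpoint when trying to track how exponents propagate.

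The main technical engine would be induction on the nilpotency class $c$ of $G$. The class-two base case is classical, since $\e(G \wedge G) \mid \e(G)$ whenever $G$ is nilpotent of class at most $2$, which is stronger than what is needed. For the inductive step, I would choose a central subgroup $Z \leq Z(G) \cap [G,G]$ of exponent dividing $\e(G)$, and exploit the Ganea exact sequence
$$(G/Z) \otimes Z \longrightarrow H_2(G,\mathbb{Z}) \longrightarrow H_2(G/Z,\mathbb{Z}) \longrightarrow 0$$
to bound $\e(H_2(G,\mathbb{Z}))$ in terms of $\e(H_2(G/Z,\mathbb{Z}))$ and $\e((G/Z) \otimes Z)$, the latter being controlled by $\e(G/Z)$ and $\e(Z)$.

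The main obstacle, and the reason Conjecture \ref{C2} remains open even after Vaughan-Lee's refutation of Conjecture \ref{C1}, is precisely this induction: a naive telescoping of the Ganea sequence along the lower central series of a class-$c$ group costs one factor of $\e(G)$ per step and yields only the much weaker bound $\e(G)^{c-1}$. Breaking this barrier appears to require a class-independent estimate for the image of $(G/Z) \otimes Z$ in $H_2(G,\mathbb{Z})$, which in turn hinges on subtle annihilator properties of the non-abelian tensor square. Vaughan-Lee's counterexamples, of nilpotency class $9$ yet with only a single extra factor of the prime appearing in the multiplier exponent, strongly suggest that any successful proof must genuinely exploit the interaction between the exponent and the commutator calculus, rather than merely counting nilpotency steps.
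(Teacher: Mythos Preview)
The statement you are attempting to prove is \emph{Conjecture}~\ref{C2}, and the paper does not contain a proof of it: it is recorded precisely as an open conjecture attributed to Moravec. What the paper does provide is Theorem~B, a list of special classes of groups (low exponent, certain solvable cases, $p$-groups with abelian or powerful commutator subgroup, etc.) for which the bound $\e(H_2(G,\mathbb{Z}))\mid(\e(G))^2$ has been established in the literature. There is therefore no ``paper's own proof'' to compare against.

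Your proposal is not a proof either, and you say so yourself: the reduction to Sylow $p$-subgroups via restriction--corestriction is correct and is exactly the ``standard argument'' the paper alludes to when it remarks that Conjecture~\ref{C3} would imply Conjecture~\ref{C2}; the passage to $G\wedge G$ and the Ganea sequence are legitimate tools. But, as you correctly diagnose, the inductive step along the lower central series only yields $\e(G)^{c-1}$, which is the Ellis-type bound recorded in Theorem~D(1), not $\e(G)^2$. The obstacle you name---controlling the image of $(G/Z)\otimes Z$ in $H_2(G,\mathbb{Z})$ uniformly in the class---is exactly why the conjecture is open. So your write-up is an accurate account of a natural strategy and its failure point, but it is not a proof and should not be presented as one; the paper treats Conjecture~\ref{C2} as unresolved.
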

 
 The authors of \cite{APT} conjecture that,
 
 \begin{conj}\label{C3}
 if  G is a finite p-group, then $\e(H_2(G,\mathbb{Z})) \mid p\ \e(G)$.  
\end{conj}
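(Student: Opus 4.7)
The plan is to attack Conjecture~\ref{C3} through the nonabelian exterior square $G \wedge G$, using the short exact sequence $0 \to H_2(G,\mathbb{Z}) \to G \wedge G \to [G,G] \to 1$ of Miller. Since $H_2(G,\mathbb{Z})$ embeds as a subgroup of $G \wedge G$, a bound on $\e(G \wedge G)$ automatically yields one on $\e(H_2(G,\mathbb{Z}))$, and the task reduces to showing that $(g \wedge h)^{p\,\e(G)} = 1$ for every $g,h \in G$ whenever $G$ is a finite $p$-group. Equivalently, via the Hopf formula $H_2(G,\mathbb{Z}) \cong (R \cap [F,F])/[F,R]$, one has to prove $x^{p\,\e(G)} \in [F,R]$ for every $x \in R \cap [F,F]$ in a free presentation $G = F/R$.

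I would split the argument according to the nilpotency class $c$ of $G$ relative to $p$. In the regime $c < p$, the Lazard correspondence gives a bijection with nilpotent Lie rings in which $p$-th power maps and Lie brackets are compatible via the Baker--Campbell--Hausdorff formula; one can then translate the problem into a statement about the Chevalley--Eilenberg homology of the associated Lie ring, where elementary antisymmetry on $L \wedge L$ plausibly yields the much stronger bound $\e(H_2(G,\mathbb{Z})) \mid \e(G)$. In the regime $c \geq p$ --- which is precisely where the Lazard correspondence breaks down, and, not coincidentally, where all known counterexamples to Conjecture~\ref{C1} live --- I would attempt a direct collection-process computation, expanding $(gh)^{p\,\e(G)}$ in a free $p$-group of the appropriate class and tracking how $p$-th powers distribute over nested commutators modulo $[F,R]$, in the spirit of the Hall--Petrescu formula.

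The hardest part will be ensuring that exactly one extra factor of $p$ is picked up in passing from class $<p$ to class $\geq p$, rather than one factor of $p$ per step of the lower central series. A naive induction along $\gamma_i(G)$ together with the five-term exact sequence produces the vastly weaker bound $p^{c-1}\e(G)$; to avoid this accumulation one seems to need a single global commutator identity that captures the entire ``$p$-defect'' in one move. A useful sanity check, and a source of clues for the right identity, is that the counterexamples of Bayes--Kautsky--Wamsley and of Vaughan--Lee cited in the introduction all saturate the conjectured bound, with $\e(H_2(G,\mathbb{Z})) = p\,\e(G)$; this strongly suggests that the factor of $p$ in Conjecture~\ref{C3} is tight, and that any successful proof must extract exactly this factor and no more.
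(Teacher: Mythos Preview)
The statement you are attempting to prove is Conjecture~\ref{C3}, which the paper explicitly presents as an \emph{open conjecture} attributed to \cite{APT}; the paper offers no proof of it in general, only a list (Theorem~C) of restricted classes of $p$-groups for which it has been verified. There is therefore no proof in the paper against which to compare your proposal.

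On its own terms, your proposal is a strategy outline rather than a proof, and you yourself flag the gap: in the regime $c\geq p$ you do not possess the ``single global commutator identity'' needed to stop the extra factors of $p$ from accumulating along the lower central series. The regime of small class is already settled --- the paper records (Theorem~A, item~9) that for odd $p$ and $c\leq p$ even the stronger Conjecture~\ref{C1} holds --- so the entire content of Conjecture~\ref{C3} lies precisely in the range where your plan is admittedly incomplete. The Hall--Petrescu collection process you invoke does yield bounds, but the bounds obtained that way in the literature are those of Theorem~D, namely $\e(M(G))\mid\e(G)^{n}$ with $n$ growing roughly like $\log_{p-1}c$, not $p\cdot\e(G)$; collapsing that exponent down to a single factor of $p$ is exactly the open problem. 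Until you can exhibit the promised identity, what you have written is a restatement of why Conjecture~\ref{C3} is hard, not a proof of it.
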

The authors of \cite{APT} conjecture this at least 6 months prior to the counterexample produced by Vaughan-Lee. Clearly the counterexamples for Schur's conjecture given by the authors of \cite{BKW} and \cite{PM1} are not counterexamples for Conjecture \ref{C3}. The counterexamples produced by Vaughan-Lee to Schur's conjecture are not counterexample to Conjecture \ref{C3}. If Conjecture \ref{C3} holds, then using a standard argument given in Theorem 4, Chapter IX of \cite{JPS}, it will follow that $\e(H_2(G,\mathbb{Z})) \mid (\e(G))^2$ for any finite group $G$.\\

For the benefit of the readers, without further ado, we collect the classes of groups for which the above conjectures have been proved. After that we will give a brief outline of the strategy used to prove the above conjectures. 

\begin{thmA}
\begin{itemize}
Conjecture \ref{C1} holds for the following classes of groups :
\item[$1.$] Groups for which the Bogomolov multiplier is trivial. ( cf. \cite{KK} for $\Sh$-rigid groups and triviality of Bogomolov multiplier for such groups.)
\item[$2.$] If the commuting probability of a finite group $G$ is greater than 0.25. (cf. \cite{JM})
\item[$3.$] Central extensions of most finite simple groups. (cf. \cite{DLT1})
\item[$4.$] $p$-groups of nilpotency class less than or equal to $3$ for all primes $p$. (cf. \cite{PM2})
\item[$5.$] Powerful $p$-groups. (cf. \cite{LM}, \cite{APT})
\item[$6.$] $p$-groups of maximal class. (cf. \cite{PM6})
\item[$7.$] Potent $p$-groups (cf. \cite{PM4}, \cite{APT})
\item[$8.$] $p$-groups of nilpotency class less than or equal to $5$ for an odd prime $p$. (cf. \cite{APT})
\item[$9.$] $p$-groups of nilpotency class less than or equal to $p$ for an odd prime $p$. (cf. \cite{APT})
\item[$10.$] Metabelian $p$-groups of exponent $p$.  (cf. \cite{PM1}, \cite{APT})
\item[$11.$] $p$-central metabelian $p$ groups. (cf. \cite{APT})
\item[$12.$] An odd $p$-group with an abelian Frattini subgroup. (cf. \cite{AT})
 \item[$13.$] A $p$-group such that the commutator subgroup of $G$ is cyclic. (cf. \cite{AT})
 \item[$14.$] Finite metacyclic groups. (cf. \cite{AT})
 \item[$15.$] Abelian by cyclic groups. (cf. \cite{AT})
 \item[$16.$] $3$-Engel groups. (cf. \cite{PM7})
 \item[$17.$] $4$-Engel groups of exponent $e$, where $e$ is not divisible by $2$ or $5$. (cf. \cite{PM7})
\item[$18.$] A group $G$ with $\e(G/Z(G)) = p$ and $p \in \{2,3\}$.  (cf. \cite{AT})
\item[$19.$] A finitely generated group $G$ such that $\e(G/Z(G)) = 6$. (cf. \cite{AT})

\end{itemize}
 \end{thmA}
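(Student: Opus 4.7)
The plan is to reduce Conjecture~\ref{C1} for each of the nineteen classes to a concrete identification of $H_2(G,\mathbb{Z})$ via Hopf's formula
\[H_2(G,\mathbb{Z})\;\cong\;(R\cap[F,F])/[F,R],\]
where $1\to R\to F\to G\to 1$ is a free presentation, and then to bound the exponent of a representative commutator modulo $[F,R]$. The unifying tactic is to show that every element of $H_2(G,\mathbb{Z})$ admits a controlled commutator representative whose $\e(G)$-th power already lies in $[F,R]$, so that it dies in the multiplier. Equivalently, one passes to the nonabelian exterior square $G\wedge G$ and bounds the exponent of the kernel of $G\wedge G\to[G,G]$.

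For items~1--3 I would handle each family with its characteristic tool. For $\Sh$-rigid groups (1) I would invoke the description of the Bogomolov multiplier $B_0(G)$ as the subgroup of $H_2(G,\mathbb{Z})$ generated by the images of commuting pairs, and then use the rigidity hypothesis to force the remaining part of the multiplier to have the required exponent. For groups with commuting probability greater than $1/4$ (2), the structural consequence that $G$ is close to abelian (in particular, that $[G,G]$ is very small) reduces the problem to a case covered by items~10--15. For central extensions of simple groups (3) the argument is to combine the classical list of Schur multipliers of the finite quasi-simple groups with the five-term exact sequence applied to the given central extension.

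For the $p$-group families (items~4--13) the natural route is to pass to the associated graded Lie ring via the Magnus--Lazard correspondence, which is available whenever the nilpotency class is small compared to $p$. This converts the commutator identities one needs into Lie-bracket identities and permits $p$-th powers to be pushed through brackets cleanly; the powerful, potent, small-class, and $p$-central hypotheses are exactly what is needed to force the commutator-collection formulas to terminate without excess $p$-power contributions. Items~14--15 I would handle by direct computation on the cyclic extension using the Lyndon--Hochschild--Serre spectral sequence. The Engel cases (16--17) use Zelmanov-type locally-nilpotent reductions to bring $n$-Engel groups of bounded exponent into the nilpotent setting, whereupon the Lie-ring method applies again. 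Finally, items~18--19 exploit that a small bound on $\e(G/Z(G))$ forces $[G,G]$ to have very small exponent, and then Hopf's formula yields the bound directly.

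The principal obstacle at each item~(4)--(17) is the same: producing a commutator identity valid modulo $[F,R]$ that expresses $g^{\e(G)}$ as a product of trivial terms and relators, without picking up unwanted factors of $p$ from the Hall--Petresco expansion. This is precisely the step that must fail once the nilpotency class grows, as Vaughan-Lee's exponent-$5$ counterexample of class~$9$ now shows; consequently the proof at each item is forced to use its structural hypothesis---bounded class, powerfulness, potency, or a small central quotient---to truncate these expansions before the obstruction appears, and the main work is checking that the hypothesis is strong enough to effect precisely this truncation.
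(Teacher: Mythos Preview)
The paper is a survey: Theorem~A is not given a self-contained proof but is assembled from the literature, each item carrying a citation to where it is actually established. The paper confines itself to describing three umbrella strategies in Section~2 (bound $\e(G\wedge G)$; bound the exponent of the commutator subgroup of a covering group; bound the exponent of the Bogomolov multiplier) and then supplies a full argument only for item~1, as Lemma~\ref{L:3.1}. So for items~2--19 there is no ``paper's own proof'' to compare against beyond the references; your outline is a plausible roadmap, and it broadly aligns with the paper's Strategy~1 (the exterior-square kernel is Hopf's formula in disguise), though several of the cited arguments---e.g.\ the small-class results in \cite{APT}, \cite{AT} and the maximal-class case in \cite{PM6}---proceed by direct commutator collection in $G\wedge G$ or in a covering group rather than through the Lazard Lie correspondence you propose.

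For item~1, where a comparison is possible, your sketch contains a genuine slip. You describe $B_0(G)$ as ``the subgroup of $H_2(G,\mathbb{Z})$ generated by the images of commuting pairs''; that subgroup is $M_0(G)$, and $B_0(G)\cong M(G)/M_0(G)$ is the \emph{quotient} by it. Consequently there is no ``remaining part of the multiplier'' to control once $B_0(G)=0$: the hypothesis already gives $M(G)=M_0(G)$. The paper's proof (Lemma~\ref{L:3.1}) is then a one-liner: each generator $x\wedge y$ of $M_0(G)$ has $[x,y]=1$, hence $(x\wedge y)^{\e(G)}=x^{\e(G)}\wedge y=1$, and since these generators are central in $G\wedge G$ the exponent bound on $M(G)$ follows. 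Your route via Hopf's formula and a ``controlled commutator representative'' is not wrong in spirit, but it misses that for this item the whole difficulty has already evaporated.
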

 
 \begin{thmB}
 \begin{itemize}
 Conjecture \ref{C2} is valid for the following classes of groups:
\item[$1.$]  A group of exponent $e$, where $e\leq 7$. (cf. \cite{NS2})
\item[$2.$] An odd solvable group of exponent $p$ and derived length $3$. (cf. \cite{AT})
\item[$3.$]  A $p$-group having an abelian normal subgroup $N$ of index $p^l$, $p \neq 2$ and $l$ is less than max $\{7,p+2\}$. (cf. \cite{AT})
 \item[$4.$] An odd $p$-group whose commutator subgroup is powerful. (cf. \cite{AT})
  \item[$5.$] An odd $p$-group $G$ such that $\gamma_{p+1}(G)$ is powerful. (cf. \cite{AT})
\end{itemize}
 \end{thmB}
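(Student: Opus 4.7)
The strategy is uniform across the five items: for each class of groups $G$, identify a normal subgroup $N \trianglelefteq G$ such that both $N$ and $G/N$ lie in a class for which Conjecture \ref{C1} has already been established in Theorem A, and then use the Lyndon-Hochschild-Serre (LHS) spectral sequence of the extension $1 \to N \to G \to G/N \to 1$ to bound $\e(H_2(G,\mathbb{Z}))$ via the three subquotients $E^\infty_{2,0} \subseteq H_2(G/N,\mathbb{Z})$, $E^\infty_{1,1} \subseteq H_1(G/N, N^{\mathrm{ab}})$, and $E^\infty_{0,2} \subseteq (H_2(N,\mathbb{Z}))_{G/N}$. Since the outer terms each contribute a factor dividing $\e(G)$ under the hypotheses of the corresponding item, combining them gives the desired bound $(\e(G))^2$, provided that the middle term $E^\infty_{1,1}$ can be absorbed into one of the outer factors. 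This last absorption step is precisely where the odd-prime hypothesis is used.

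For item $(1)$, the K\"unneth formula and Sylow decomposition reduce the problem to $p$-groups of small exponent; Burnside-Hall-Higman-Kostrikin theory provides explicit nilpotency-class bounds that place each such Sylow subgroup within items $(4)$, $(8)$, or $(9)$ of Theorem A, which in fact yield the stronger Conjecture \ref{C1}. Item $(2)$ uses $N = G''$: the quotient $G/G''$ is metabelian of exponent $p$, covered by Theorem A, item $(10)$, while $G''$ is abelian. Item $(3)$ takes $N$ to be the given abelian normal subgroup, whence $H_2(N,\mathbb{Z}) \cong N \wedge N$ is controlled directly, and the hypothesis on $l$ bounds the nilpotency class of $G/N$ so that items $(4)$, $(8)$, or $(9)$ of Theorem A apply. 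Items $(4)$ and $(5)$ use $N = G'$ and $N = \gamma_{p+1}(G)$ respectively; in both cases $N$ is powerful, so Theorem A, item $(5)$ bounds $\e(H_2(N,\mathbb{Z}))$, while $G/N$ has nilpotency class at most $1$ or $p$ and is handled by items $(4)$ or $(9)$.

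The main technical obstacle, which I expect to be the crux of each argument, is controlling the middle layer $H_1(G/N, N^{\mathrm{ab}})$ so that its exponent does not contribute a third factor of $\e(G)$. For odd $p$-groups the absence of $2$-torsion, together with the Hall-Petrescu commutator identity available in $p$-groups of small nilpotency class, permits the required cancellation; this is precisely the step that fails for $p=2$, as the counterexamples of \cite{BKW} and \cite{PM1} demonstrate. A secondary difficulty is the boundary behavior in item $(3)$, where $l$ is allowed to grow up to $\max\{7,p+2\}-1$: at the largest admissible values the quotient $G/N$ sits at the edge of the nilpotency-class ranges covered by items $(4)$, $(8)$, $(9)$ of Theorem A, and a careful case analysis is needed to confirm that the Schur-exponent bound on $H_2(G/N,\mathbb{Z})$ still holds with only a single factor of $\e(G)$ rather than accumulating further powers from residual differentials in the LHS spectral sequence.
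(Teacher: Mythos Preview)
The paper is a survey and does not itself prove Theorem~B; each item is simply attributed to the cited references \cite{NS2} and \cite{AT}. The only proof-level content the paper offers is the list of ``Common Strategies'' in Section~2, all of which work inside the nonabelian exterior square $G\wedge G$ (or, equivalently, inside the commutator subgroup of a covering group). That is the approach of the cited sources: one bounds $\e(G\wedge G)$ directly, using commutator calculus and the Hall--Petrescu identity to push elements of the form $(g\wedge h)^{\e(G)}$ or $(g\wedge h)^{(\e(G))^2}$ to the identity. Your LHS spectral-sequence framework is a genuinely different organizing principle, and it is not the route taken in \cite{NS2} or \cite{AT}.

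More importantly, your proposal has a real gap. The three-step filtration on $H_2(G)$ coming from the LHS spectral sequence gives, a priori, $\e(H_2(G))\mid \e(E^\infty_{2,0})\cdot \e(E^\infty_{1,1})\cdot \e(E^\infty_{0,2})$, which is \emph{three} factors, each potentially as large as $\e(G)$. You assert that the middle term ``can be absorbed into one of the outer factors'' and that ``the odd-prime hypothesis'' together with Hall--Petrescu ``permits the required cancellation,'' but no mechanism is given. There is no general exact sequence or differential in the LHS spectral sequence that kills $E^\infty_{1,1}$ against $E^\infty_{2,0}$ or $E^\infty_{0,2}$ using only an odd-$p$ hypothesis; if you want to collapse three factors to two you must exhibit an explicit relation, and that relation lives most naturally back inside $G\wedge G$, which brings you to the paper's Strategy~1 anyway. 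As written, your outline proves only $\e(H_2(G))\mid (\e(G))^3$ for items $(2)$--$(5)$.

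There is also a specific error in item~$(1)$: you claim the argument ``in fact yield[s] the stronger Conjecture~\ref{C1}.'' This is false. For $e=4$ the relevant Sylow subgroup is a $2$-group of exponent~$4$, and the counterexamples of \cite{BKW} and \cite{PM1} already show Conjecture~\ref{C1} fails there; only Conjecture~\ref{C2} survives. The actual argument in \cite{NS2} is the class-dependent bound recorded as Theorem~D(3), combined with known class bounds for groups of small exponent, and it yields exactly the square, not the first power.
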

 
 \begin{thmC}
 \begin{itemize}
 Conjecture \ref{C3} holds for the following classes of groups :
\item[$1.$] An odd $p$-group of nilpotency class less than or equal to $7$. (cf. \cite{AT})
\item[$2.$]  A $p$-group $G$ such that $\e(Z(G))=p$ and nilpotency class of $G$ is at most $p+1$. (cf. \cite{AT})
\item[$3.$]  A group having an abelian normal subgroup $N$ of index $p^2$, and $p \neq 2$. (cf. \cite{AT})
 \item[$4.$] A $2$-group such that the frattini subgroup of $G$ is abelian. (cf. \cite{AT})
  \item[$5.$] An odd $p$-group whose frattini subgroup is powerful. (cf. \cite{AT})
  \item[$6.$] A $p$-group $G$ with $\e(G/Z(G)) = p$ and $p \leq 7$. (cf. \cite{AT})
 \end{itemize}
 \end{thmC}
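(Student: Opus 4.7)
The plan is to handle each of the six parts by studying the Schur multiplier through the non-abelian exterior square, exploiting the central extension
\[
1 \longrightarrow H_2(G,\mathbb{Z}) \longrightarrow G \wedge G \longrightarrow [G,G] \longrightarrow 1,
\]
so that a bound on $\e(G \wedge G)$ automatically controls $\e(H_2(G,\mathbb{Z}))$. A preliminary step common to all six parts is to verify that $\e([G,G]) \mid \e(G)$ in the regime considered, after which the target bound $\e(H_2(G,\mathbb{Z})) \mid p\,\e(G)$ reduces to a question about how exponents behave when elements are lifted through the above extension.

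For parts (1), (2) and (6), which impose restrictions on the nilpotency class or on $\e(G/Z(G))$, I would argue by induction on the class, using the five-term exact sequence of a carefully chosen central extension $1 \to Z \to G \to G/Z \to 1$ together with the commutator identity
\[
[x,y]^p \equiv [x^p,y] \pmod{\gamma_{p+1}(G)},
\]
which is valid for odd primes. This identity lets one ``move'' a $p$-th power across a commutator at the cost of a single extra factor of $p$, which is precisely the slack allowed by Conjecture~\ref{C3}; the bound ``class at most $p+1$'' in part (2) and the restriction $p \leq 7$ in parts (1) and (6) appear because beyond these ranges the Jacobi-like identities modulo $p$ no longer absorb the accumulated correction terms.

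For parts (3), (4) and (5) I would exploit the abelian (resp.\ abelian-Frattini, powerful-Frattini) normal subgroup $N$ of small index and run the Lyndon--Hochschild--Serre spectral sequence of $1 \to N \to G \to G/N \to 1$. The five-term exact sequence reduces the question to bounds on $H_2(G/N,\mathbb{Z})$ (manageable because $[G:N]$ is small, e.g.\ $p^2$ in part (3)) and on the coinvariants $H_2(N,\mathbb{Z})_{G/N}$, which are tractable because Conjecture~\ref{C1} is already known for abelian groups and for powerful $p$-groups by items (5) and (14) of Theorem~A. Gluing the two bounds back through the extension produces at most one additional factor of $p$.

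The main obstacle is the control of this extra $p$-factor. For parts (1) and (6) the commutator calculus operates close to the edge of the Lazard-correspondence range, and any attempt to push the argument past class $7$ (or past $p = 7$) is precisely where the Vaughan--Lee counterexample in exponent $5$ described in the introduction signals that naive extensions must fail. For parts (3)--(5) the delicate step is analysing the differentials $d_2, d_3$ in the spectral sequence that a priori could inflate the exponent beyond $p\,\e(G)$, and showing that the abelian or powerful hypothesis on $N$ together with the odd-prime commutator identity forces these differentials to factor through $p$-torsion of bounded order.
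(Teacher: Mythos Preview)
The paper does not contain a proof of Theorem~C. This is a survey article: every one of the six items in Theorem~C is stated with a citation to \cite{AT}, and no argument for any of them is given in the present paper. The only proof the paper supplies is the short argument for Lemma~\ref{L:3.1}. What the paper does offer, in Section~2, is a list of three general ``Strategies'' (bound $\e(G\wedge G)$; bound the exponent of the commutator subgroup of a covering group; study the Bogomolov multiplier), but these are stated as one-line descriptions, not as proofs.

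Your proposal is therefore not comparable to anything in the paper. What can be said is that your outline is broadly consonant with the paper's Strategy~1: you work with the extension $1\to H_2(G,\mathbb{Z})\to G\wedge G\to [G,G]\to 1$ and try to bound $\e(G\wedge G)$, which is exactly the template the survey singles out. Beyond that, your sketch remains at the level of plausible heuristics (the Hall--Petrescu-type identity for parts (1), (2), (6); the Lyndon--Hochschild--Serre spectral sequence for parts (3)--(5)) and does not engage with the specific obstructions that make each case nontrivial; to assess correctness one would have to consult \cite{AT} itself, since the present paper gives nothing to check against.
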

 
 \section{Common Strategies}

In this section, we briefly explain the common strategies used to prove the Theorems in the previous section. One of the tools used is a construction introduced by R. Brown and J.-L. Loday in \cite{BL1} and \cite{BL2}, called the nonabelian tensor product $G\otimes H$. The nonabelian tensor product of groups is defined for a pair of groups that act on each other provided the actions satisfy the compatibility conditions of Definition \ref{D:1.1} below. Note that we write conjugation on the left, so $^gg'=gg'g^{-1}$ for $g,g'\in G$ and $^gg'\cdot g'^{-1}=[g,g']$ is the commutator of $g$ and $g'$.

\begin{definition}\label{D:1.1}
Let $G$ and $H$ be groups that act on themselves by conjugation and each of which acts on the other. The mutual actions are said to be compatible if
\begin{equation}
  ^{(^h g)}h_1=\; (^h(^g(^{h^{-1}}h_1)))  \;and\; ^{(^g h)}g_1=\ (^g(^h(^{g^{-1}}g_1))) \;\mbox{for \;all}\; g,g_1\in G, h,h_1\in H.
\end{equation}
\end{definition}

\begin{definition}\label{D:1.2}
If $G$ and $H$ are groups that act compatibly on each other, then the \textbf{nonabelian tensor product} $G\otimes H$ is the group generated by the symbols $g\otimes h$ for $g\in G$ and $h\in H$ with relations
\begin{equation}\label{E:1.1.1}
gg'\otimes h=(^gg'\otimes \;^gh)(g\otimes h),    
\end{equation}
\begin{equation}\label{E:1.1.2}
g\otimes hh'=(g\otimes h)(^hg\otimes \;^hh'),   
\end{equation}
\noindent for all $g,g'\in G$ and $h,h'\in H$.
\end{definition}
The nonabelian tensor square, $G\otimes G$, of a group $G$ is a special case of the nonabelian tensor product of a pair of groups $G$ and $H$, where $G=H$, and all actions are given by conjugation. 
There exists a homomorphism $\kappa : G\otimes G \rightarrow G^{\prime}$ sending $g\otimes h$ to $[g,h]$. Let $\nabla (G)$ denote the subgroup of $G\otimes G$ generated by the elements $x\otimes x$ for $x\in G$. The exterior square of $G$ is defined as $G\wedge G= (G\otimes G)/\nabla (G)$ and denote the induced homomorphism again by $\kappa : G\wedge G \rightarrow G^{\prime}$. Let $M(G):=\ker ( G\wedge G \rightarrow G' )$. It has been shown in \cite{CM} that $M(G)\cong H_{2}(G, \mathbb{Z})$. 
Now we outline most common methods to prove the previous Theorems stated in the Introduction.

\begin{itemize}
\item {\bf Strategy 1} Try to find the exponent of $G\wedge G$. If the exponent of $G\wedge G$ divides the exponent of $G$, then Schurs Conjecture holds true, since $M(G)$ is a subgroup of $G\wedge G$.

\item {\bf Strategy 2} The second strategy is almost similar to the first one. Instead of estimating the exponent of $G\wedge G$, one estimates the exponent of the commutator subgroup of a covering group of $G$, thanks to Schur (cf. \cite{GK}), the commutator subgroups of any two covering groups are isomorphic.

\item {\bf Strategy 3} One can also estimate the exponent of the Bogomolov multiplier of the group $G$.
\end{itemize}

\section{Noether's Rationality Problem and Schur's Exponent Conjecture}

Let $k$ be any field and $G$ be a finite group. Let $G$ act on the rational function field $k(x_g, g\in G)$ by $k$ automorphisms defined by $g.x_h:=x_{gh}$ for all $g,h\in G$.  Denote by $F(G)$ the fixed field  $k(x_g, g\in G)^{G}$. Noether's problem asks whether $F(G)$ is purely transcendental over $k$. Bogomolov multiplier is the group $B_0(G)=\ker\{H^2(G, \mathbb{Q}/{\mathbb{Z}})\to \bigoplus_A H^2(A,\mathbb{Q}/{\mathbb{Z}})\}$, where $A$ runs over all Abelian subgroups of $G$. Let $M_0(G)$ be the subgroup of $G\wedge G$ generated by $x\wedge y$ such that $[x,y]=1$. It is shown in \cite{PM8} that for a finite group $G$, $B_0(G)$ is non-canonically isomorphic to $M(G)/M_0(G)$. More on Bogomolov multiplier and its relation to Noether's Rationality problem can be found in \cite{B1}, \cite{B2} and \cite{S}. Bogomolov multiplier $B_0(G)$ can be seen as an obstruction to Noether's rationality problem. In particular, if $B_0(G)\neq 0$, then Noether's Rationality problem has a negative answer for $G$. In this section, we show that if the Bogomolov multiplier $B_0(G)=0$, then Schur's Conjecture \ref{C1} holds. This statement gives us a connection between Noether's Rationality problem and Schur's exponent conjecture.

Sometime in 2015, in an evening walk, Guram Donadze mentioned to the author that triviality of the Bogomolov multiplier implies Schur's conjecture. Since the author could not find a reference to this statement, we produce it here for the benefit of the reader and also because the proof is short.

\begin{lemma}\label{L:3.1}
Let $G$ be a finite group. If $B_0(G)=0$, then Conjecture $\ref{C1}$ holds true.
\end{lemma}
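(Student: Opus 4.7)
The plan is to exploit the identification $B_0(G)\cong M(G)/M_0(G)$ stated in the introduction of the section. The hypothesis $B_0(G)=0$ gives $M(G)=M_0(G)$, so every element of $H_2(G,\mathbb{Z})\cong M(G)$ is a product of elements of the form $x\wedge y$ with $[x,y]=1$. Since $H_2(G,\mathbb{Z})$ is abelian, to conclude $\e(H_2(G,\mathbb{Z}))\mid \e(G)$ it suffices to prove that each such commuting-pair generator satisfies $(x\wedge y)^{\e(G)}=1$.

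For this I would work one level up, in the nonabelian tensor square $G\otimes G$, and then project. Two elementary consequences of the defining relations \eqref{E:1.1.1}--\eqref{E:1.1.2} will be used. First, taking $g=g'=1$ in \eqref{E:1.1.1} shows $1\otimes h=(1\otimes h)(1\otimes h)$, so $1\otimes h=1$ in $G\otimes G$. Second, if $[x,y]=1$ then ${}^xy=y$ and ${}^xx=x$, and \eqref{E:1.1.1} specializes to $x\cdot x^{n-1}\otimes y=(x^{n-1}\otimes y)(x\otimes y)$; a short induction on $n$ yields the identity
\[
x^n\otimes y=(x\otimes y)^n \qquad \text{whenever } [x,y]=1.
\]
Setting $n=\e(G)$ gives $(x\otimes y)^{\e(G)}=x^{\e(G)}\otimes y=1\otimes y=1$ in $G\otimes G$. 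Projecting along $G\otimes G\twoheadrightarrow G\wedge G$ produces $(x\wedge y)^{\e(G)}=1$, and since the image $\kappa(x\wedge y)=[x,y]=1$, this element indeed lies in $M(G)$.

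Finally I would assemble the pieces: $M(G)=M_0(G)$ is generated inside the abelian group $H_2(G,\mathbb{Z})$ by elements of order dividing $\e(G)$, so the exponent of the whole group divides $\e(G)$, which is Conjecture \ref{C1}.

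I do not anticipate a real obstacle: the only nontrivial content is the tensor identity $x^n\otimes y=(x\otimes y)^n$ for commuting pairs, which is a two-line induction from the definition. The lemma is really a translation exercise, turning the vanishing of the Bogomolov multiplier into the statement that $H_2(G,\mathbb{Z})$ is generated by elements whose order is visibly controlled by $\e(G)$ via the exterior-square presentation.
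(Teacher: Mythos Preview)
Your argument is correct and follows essentially the same route as the paper: from $B_0(G)=0$ deduce $M(G)=M_0(G)$, use the identity $x^n\wedge y=(x\wedge y)^n$ for commuting pairs to kill each generator at $n=\e(G)$, and conclude since these generators lie in an abelian (indeed central) subgroup. The only cosmetic differences are that you verify the power identity one level up in $G\otimes G$ before projecting, and you invoke the abelianness of $H_2(G,\mathbb{Z})$ where the paper invokes the centrality of $M_0(G)$ in $G\wedge G$.
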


\begin{proof}
Note that triviality of $B_0(G)$ implies that $M(G)=M_0(G)$. Since $[x,y]=1$, we have that $x^n\wedge y=(x\wedge y)^n$. Taking $n$ to be the exponent of $G$ and noting that each $x\wedge y$ in $M_0(G)$ is contained in the center of $G\wedge G$, finishes the proof.
\end{proof}

Thus all counterexamples to Schur's conjecture will provide a negative answer to Noether's Rationality problem. Triviality of Bogomolov multiplier is not a necessary condition for Conjecture \ref{C1} to hold. Most $p$ groups of maximal class have non-trivial Bogomolov multiplier (cf. \cite{AJ}), but Conjecture \ref{C1} holds for $p$ groups of maximal class. On the other hand, the Bogomolov multiplier is trivial for all cyclic groups, but Noether's Rationality problem does not hold for all cyclic groups. (cf. \cite{RGS}, \cite{BP})

\section{Bounds depending on nilpotency class, derived length and the rank.}

Several authors have given bounds for the exponent of the Schur Multiplier in terms of the exponent of the group, the nilpotency class, derived length and the rank of the group. The authors of \cite{SN} improved the bounds given by the authors of \cite{LM} depending on the exponent and the rank of the group. Below we list some of the bounds given in terms of the nilpotency class and the derived length of a group.

\begin{thmD}
\begin{itemize}
Let $G$ be a $p$ group of nilpotency class $c$.

\item[$1.$]  $exp(M(G))\mid (exp(G))^{\ceil{\frac{c}{2}}}$. (cf. \cite{GE2})
\item[$2.$]  $exp(M(G))\mid (exp(G))^{2(\floor{\log_2 c})}$. (cf. \cite{PM1})
\item[$3.$]  $exp(M(G))\mid (exp(G))^m$, where $m=\floor{\log_{p-1} c}+1$. (cf. \cite{NS2})
\item[$4.$]  $exp(M(G))\mid (exp(G))^n$, where $n = 1+\ceil{\log_{p-1} (\frac{c+1}{p+1})}$. (cf. \cite{APT})

\end{itemize}
\end{thmD}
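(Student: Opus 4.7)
The plan is to attack all four bounds through Strategy 1 of the preceding section: bound the exponent of the exterior square $G\wedge G$ in terms of $\exp(G)$ and $c$, and then use the inclusion $M(G)\hookrightarrow G\wedge G$. The common engine I would set up is a filtration of $G\wedge G$ induced by the lower central series of $G$. Define
$$W_k=\langle a\wedge b : a\in\gamma_k(G),\ b\in G\rangle$$
and use the defining relations (\ref{E:1.1.1})--(\ref{E:1.1.2}) to check that each $W_k$ is normal in $G\wedge G$, that $W_{c+1}=1$ because $\gamma_{c+1}(G)=1$, and that conjugation in $G\wedge G$ shifts $W_k$ into $W_{k+1}$ modulo commutators deeper in the filtration. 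The core computation is then an identity of the shape
$$(a\wedge b)^{\exp(G)}\equiv (a^{\exp(G)}\wedge b)\equiv 1\pmod{W_\ell},$$
with $\ell$ depending on the layer $a\wedge b$ lives in; iterating layer by layer, every element of $W_k$ is annihilated by a power of $\exp(G)$ whose exponent counts how many steps are needed to reach $W_{c+1}$.

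For bound (1), I would follow Ellis and pair layer $i$ against layer $c-i+1$, consuming two layers of the lower central series per factor of $\exp(G)$ and thus producing the ceiling $\lceil c/2\rceil$. For bound (2), one replaces this linear descent with Moravec's doubling trick: since $[\gamma_i(G),\gamma_j(G)]\subseteq \gamma_{i+j}(G)$, passing from ``weight $\geq k$'' to ``weight $\geq 2k$'' costs a factor $\exp(G)^2$ per doubling, and only $\lfloor\log_2 c\rfloor$ doublings are needed to clear $c$. For the $p$-group refinements (3) and (4), doubling is replaced by a $(p-1)$-fold jump driven by the Hall--Petrescu collection formula: in a $p$-group one has $(xy)^p\equiv x^p y^p$ modulo commutators of weight $\geq p$, so raising to the $p$-th power pushes $W_k$ into $W_{(p-1)k+1}$ up to a correctable error, producing the base $p-1$ in the logarithm. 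Bound (4) trims the count further by exploiting that in a $p$-group of class $\leq p+1$ the first $p+1$ layers of the filtration collapse by one extra factor, shifting the argument of the logarithm from $c$ to $(c+1)/(p+1)$.

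The main obstacle, identical across the four proofs, is controlling the error terms generated by the non-linearity of (\ref{E:1.1.1})--(\ref{E:1.1.2}): the desired identity $(a\wedge b)^n=a^n\wedge b$ never holds exactly, only modulo a commutator that lies one step deeper in $W_\bullet$. Setting up the induction so that these errors are absorbable at each stage, and tracking precisely how many layers of filtration each absorption consumes, is where the four cited papers diverge in their combinatorics. The two $p$-group bounds additionally require estimates on the $p$-adic valuations of the binomial coefficients $\binom{p}{i}$ appearing in the collection formula; making these sharp enough to yield the logarithmic constants $\lfloor\log_{p-1}c\rfloor+1$ and $1+\lceil\log_{p-1}((c+1)/(p+1))\rceil$, rather than merely a polynomial bound, is the technical heart of \cite{NS2} and \cite{APT}.
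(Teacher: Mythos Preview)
The paper is a survey and does not contain a proof of Theorem~D: the four bounds are simply quoted from the literature with citations to \cite{GE2}, \cite{PM1}, \cite{NS2}, and \cite{APT}, and no argument is supplied. There is therefore no ``paper's own proof'' to compare your proposal against.

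That said, your outline is broadly faithful to what happens in those cited works and to Strategy~1 of Section~2. A couple of cautions if you intend to flesh it out. First, the identity you want is not $(a\wedge b)^{\exp(G)}\equiv a^{\exp(G)}\wedge b$ modulo a single deeper layer; the error terms from iterating the relations produce a product of wedges of iterated commutators of increasing weight, and keeping track of \emph{how many} such terms appear (and their weights) is exactly what distinguishes the four arguments. Second, for bound~(2) Moravec's argument in \cite{PM1} does not work directly on $G\wedge G$ but on the derived series of a covering group (closer to Strategy~2), halving the derived length rather than doubling the lower-central weight; your ``doubling'' description is morally right but the actual mechanism is different. Third, for bounds~(3) and~(4) the Hall--Petrescu formula enters not inside $G\wedge G$ but in an auxiliary group (typically a Schur cover or the group $\nu(G)$), where one expands $(g\otimes h)^{p^e}$ as a product whose correction terms lie in $\gamma_p$ of that larger group; the jump is by a factor of roughly $p-1$ in weight per exponent of $p$, which is where the $\log_{p-1}$ comes from. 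Your sketch captures the shape of each argument but understates the bookkeeping required to make the inductions close.
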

Note that the last bound above is an improvement of all the previous bounds.

\begin{thmE}
\begin{itemize}
Let $G$ be a solvable $p$ group of derived length $d$.

\item[$1.$]  $exp(M(G))\mid (exp(G))^d$, if $p$ is odd. (cf. \cite{NS1}, \cite{APT})
\item[$2.$]  $exp(M(G))\mid 2^{d-1}(exp(G))^{d}$, if $p=2$. (cf. \cite{NS1}, \cite{APT})
\item[$3.$]  If the exponent of $G$ is $p$, then $exp(G\wedge G) \mid (exp(G))^{d-1}$, if $p$ is odd. (cf. \cite{AT})
\item[$4.$]  If the exponent of $G$ is $2$, then $exp(G\wedge G) \mid 2^{d-2}(exp(G))^{d-1}$. (cf. \cite{AT})

\end{itemize}
\end{thmE}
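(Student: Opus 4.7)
The plan is to prove all four parts by induction on the derived length $d$, using the normal subgroup $N = G^{(d-1)}$, the last nontrivial term of the derived series. Since $N$ is abelian and $G/N$ has derived length $d-1$, the inductive hypothesis applies to $G/N$, while the abelian-ness of $N$ gives tight control on the ``new'' contribution at each step.

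For parts (1) and (2), I would invoke the six-term exact sequence from the low-degree terms of the Lyndon--Hochschild--Serre spectral sequence of $1 \to N \to G \to G/N \to 1$:
\begin{equation*}
(H_2(N,\mathbb{Z}))_{G/N} \to M(G) \to M(G/N) \to N/[G,N] \to G^{\mathrm{ab}} \to (G/N)^{\mathrm{ab}} \to 0.
\end{equation*}
Since $N$ is abelian, $H_2(N,\mathbb{Z}) \cong \bigwedge^2 N$ has exponent dividing $\exp(N) \mid \exp(G)$, and by induction $\exp(M(G/N)) \mid \exp(G)^{d-1}$. Because $M(G)$ is abelian, the image of the leftmost map is central in $M(G)$, so exponents multiply in the resulting short exact sequence, yielding $\exp(M(G)) \mid \exp(G)^{d}$ as claimed in (1). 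For $p=2$, each inductive step picks up an additional factor of $2$ coming from the standard exponent-doubling in extensions of abelian $2$-groups (e.g.\ the square map on $\bigwedge^2 N$), accumulating to the $2^{d-1}$ correction in (2).

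For parts (3) and (4), with $\exp(G)=p$, I would argue directly with $G \wedge G$ using the Ganea-type exact sequence in the nonabelian exterior square (in the Brown--Loday framework recalled in Section 2):
\begin{equation*}
K \to G \wedge G \to (G/N) \wedge (G/N) \to 1,
\end{equation*}
where $K$ is a quotient of $N \wedge N$ together with a contribution from $N \otimes (G/N)^{\mathrm{ab}}$, and its image is central in $G \wedge G$ since $N$ is abelian and normal of exponent $p$. Each of these contributions has exponent dividing $p$, and by induction $\exp((G/N) \wedge (G/N)) \mid p^{d-2}$, so centrality gives $\exp(G \wedge G) \mid p^{d-1}$, proving (3); part (4) again accumulates a factor of $2$ per inductive step from the $p=2$ extension bookkeeping.

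The main obstacle will be to establish centrality of the kernel terms (so that exponents of kernel and quotient multiply rather than needing more delicate control), and to identify the exact form of the Ganea/Brown--Loday sequence for $N \triangleleft G$ with $N$ abelian, which requires verifying the compatibility conditions of Definition \ref{D:1.1} for the induced actions. A secondary subtlety is the base case: parts (1) and (2) start cleanly at $d=1$ via $M(G)=\bigwedge^2 G$ for abelian $G$, but (3) and (4) are degenerate at $d=1$ and should instead be anchored at $d=2$ using the already-established bound for metabelian $p$-groups of exponent $p$ (Theorem A, items 10 and 11). Finally, the appearance of the $2^{d-1}$ and $2^{d-2}$ factors for $p=2$ shows that a uniform odd-$p$ argument will not suffice; each inductive step in characteristic $2$ needs its own careful accounting of the extension class.
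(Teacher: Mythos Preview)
The paper is a survey and contains \emph{no proof} of Theorem~E; it merely records the statements and cites the original sources (\cite{NS1}, \cite{APT} for parts (1)--(2) and \cite{AT} for parts (3)--(4)). So there is nothing in the paper to compare your proposal against.

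That said, your inductive scheme on the derived length, peeling off the abelian normal subgroup $N=G^{(d-1)}$ and invoking the exact sequence for $G\wedge G$ relative to $G/N$, is indeed the approach used in the cited literature. A few cautions about the details as you wrote them. First, the six-term sequence you display is not exact at $M(G)$ in the way you need: the LHS spectral sequence filtration on $H_2(G)$ has \emph{three} graded pieces, sitting in $H_2(N)_{G/N}$, $H_1(G/N,N)$, and $H_2(G/N)$, so a naive count gives an extra factor of $\exp(G)$; the actual arguments in \cite{NS1} and \cite{APT} work instead with $G\wedge G$ (or a covering group) and the surjection $G\wedge G\twoheadrightarrow (G/N)\wedge(G/N)$ whose kernel is the image of $N\wedge G$, and they control $\exp$ of that image directly. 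Second, for parts (3)--(4) the kernel you call $K$ is not in general central in $G\wedge G$, so ``exponents multiply'' is not automatic; the point in \cite{AT} is rather that when $\exp(G)=p$ one can bound $\exp(G\wedge G)$ itself (not just $\exp(M(G))$), and the induction runs on $G\wedge G$. Third, your anchoring of (3)--(4) at $d=2$ via the metabelian exponent-$p$ case is exactly right and is how the cited papers proceed. So: right strategy, but the exact-sequence bookkeeping as written would not close without the refinements in the cited references.
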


Many classes of groups for which Conjecture $\ref{C1}$ is true falls under the class of Regular groups. The authors of \cite{NS2} and \cite{APT} proved the following:

\begin{thmF}
The following statements are equivalent:
\begin{itemize}
\item[$(i)$] $exp(M(G))\mid exp(G)$ for all regular $p$-groups $G$.
\item[$(ii)$] $exp(M(G))\mid exp(G)$ for all groups $G$ of exponent $p$.
\end{itemize}
\end{thmF}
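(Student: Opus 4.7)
The direction $(i) \Rightarrow (ii)$ is immediate, since every $p$-group of exponent $p$ is a regular $p$-group: the defining identity $(ab)^p = a^p b^p c_1^p \cdots c_k^p$ with $c_i \in [\langle a,b\rangle,\langle a,b\rangle]$ is satisfied trivially when every element already has $p$-th power equal to $1$. Thus statement $(i)$ applied to such $G$ yields $(ii)$ verbatim.

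For the converse $(ii) \Rightarrow (i)$, my plan is to induct on $n$, where $\e(G) = p^n$. The base case $n=1$ is exactly assumption $(ii)$. For the inductive step, set $N = G^p$; regularity of $G$ guarantees that $N$ is a regular $p$-group with $\e(N) = p^{n-1}$, because in a regular $p$-group every element of $G^p$ is itself a $p$-th power, and that the quotient $\bar G = G/N$ has exponent $p$. The inductive hypothesis, phrased at the level of the nonabelian exterior square, should furnish $\e(N \wedge N) \mid p^{n-1}$, while assumption $(ii)$ applied to $\bar G$ gives $\e(\bar G \wedge \bar G) \mid p$.

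Following Strategy 1 of Section 2, I would analyze the kernel $K$ of the natural surjection $G \wedge G \twoheadrightarrow \bar G \wedge \bar G$ and try to show $\e(K) \mid p^{n-1}$. The subgroup $K$ is generated by elements $x \wedge y$ with at least one of $x,y$ lying in $N$, and admits a filtration whose graded pieces are controlled respectively by $N \wedge N$ and by tensor-like cross-terms of the form $N^{ab} \otimes \bar G^{ab}$: the former is handled by the inductive hypothesis, and the latter, being annihilated by $\gcd(\e(N), \e(\bar G)) = p$, divides $p^{n-1}$ whenever $n \geq 2$. Combining $\e(K) \mid p^{n-1}$ with $\e(\bar G \wedge \bar G) \mid p$ would give $\e(G \wedge G) \mid p^n$, whence $\e(M(G)) \mid p^n = \e(G)$ since $M(G) \subseteq G \wedge G$.

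The main obstacle is controlling the inductive bound tightly: a naive Lyndon--Hochschild--Serre or five-term argument applied to $1 \to N \to G \to \bar G \to 1$ costs an extra factor of $p$ and only produces $\e(M(G)) \mid p^{n+1}$. To save the missing factor one must lift Philip Hall's collection formula from $G$ to $G \wedge G$, proving an identity of the shape $(x \wedge y)^p = (x^p \wedge y) \cdot \xi$ with $\xi$ lying in a piece of $G \wedge G$ already annihilated by $p^{n-1}$. Verifying such a $p$-th power identity compatibly with the defining relations \eqref{E:1.1.1} and \eqref{E:1.1.2}, and exploiting the fact that in regular $p$-groups $G^p = \{g^p : g \in G\}$ as sets, is the technical heart of the argument; once this identity is in hand, the induction closes cleanly and Strategy 1 completes the proof.
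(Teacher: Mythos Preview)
The paper does not supply a proof of Theorem~F; it merely records the statement and attributes it to \cite{NS2} and \cite{APT}, so there is no in-paper argument to compare your proposal against line by line.

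Your direction $(i)\Rightarrow(ii)$ is correct and is exactly the trivial direction used in the cited references: groups of exponent $p$ are regular, so $(i)$ specialises to $(ii)$.

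For $(ii)\Rightarrow(i)$ your outline (induct on $n$ where $\e(G)=p^{n}$, filter by $N=G^{p}$, and lift Hall's collection process to $G\wedge G$) is in the spirit of the arguments in \cite{NS2} and \cite{APT}, but as written it contains a genuine gap in the inductive set-up. You claim that the inductive hypothesis, ``phrased at the level of the nonabelian exterior square'', gives $\e(N\wedge N)\mid p^{\,n-1}$. The hypothesis, however, speaks only of $M(\cdot)$; from $\e(M(N))\mid p^{\,n-1}$ together with $\e(N')\mid p^{\,n-1}$ and the central extension $1\to M(N)\to N\wedge N\to N'\to 1$ one obtains only $\e(N\wedge N)\mid p^{\,2(n-1)}$. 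If instead you strengthen the statement being proved by induction to $\e(H\wedge H)\mid\e(H)$ for every regular $p$-group $H$, then the base case $n=1$ would require $\e(H\wedge H)\mid p$ for every group $H$ of exponent $p$, which is strictly stronger than hypothesis $(ii)$: the latter yields only $\e(H\wedge H)\mid p^{2}$ via the same extension. Thus neither version of the induction closes at the point where you invoke $N\wedge N$, and the proposed filtration of the kernel $K$ by $N\wedge N$ and $N^{\mathrm{ab}}\otimes \bar G^{\mathrm{ab}}$ does not, on its own, deliver $\e(K)\mid p^{\,n-1}$. The collection identity $(x\wedge y)^{p}=(x^{p}\wedge y)\cdot\xi$ that you single out is indeed the crux in the cited proofs, but it must be formulated and iterated so that only quantities controlled by $M(\cdot)$ (equivalently, by commutator subgroups of a Schur cover, as in Strategy~2) enter the estimate, rather than the full $\e(N\wedge N)$; your sketch does not yet achieve this, and that is precisely the ``missing factor of $p$'' you acknowledge but do not remove.
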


By the counterexample given by Vaughan-Lee, it follows that Schur's conjecture is not true for regular $p$-groups.

\section{Interesting Questions}

The following are the questions that the author finds most interesting with regards to Schurs exponent conjecture.\\

{\bf Question 1}: Is Conjecture $\ref{C1}$ true for metabelian groups? \\

We know that Conjecture $\ref{C1}$ is false in general. But we do not have any counterexample of groups of nilpotency class less than $9$ to Conjecture $\ref{C1}$. So the following question is interesting.\\

{\bf Question 2}: Let $p$ be an odd prime. Is Conjecture $\ref{C1}$ true for $p$ groups of class less than or equal to $8$?

  \bibliographystyle{amsplain}
\bibliography{Bibliography}
\end{document}